\theoremstyle{thmstyleone}%
\newtheorem{theorem}{Theorem}
\newtheorem{proposition}[theorem]{Proposition}%
\newtheorem{lemma}[theorem]{Lemma}
\theoremstyle{thmstyletwo}%
\theoremstyle{thmstylethree}%
\begin{document}

\title[Article Title]{Equivariant gluing theory on regular instanton moduli spaces}


\author{\fnm{Shuaige} \sur{Qiao}}



\abstract{We follow the idea of gluing theory in instanton moduli spaces and discuss the case when there is a finite group $\Gamma$ acting on the 4-manifolds $X_1, X_2$ with $x_1, x_2$ as isolated fixed points, how to glue two $\Gamma$-invariant ASD connections over $X_1, X_2$ together to get a $\Gamma$-invariant ASD connection on the connected sum $X_1\# X_2$.}

\keywords{instanton moduli space, equivariant gluing theory}


\pacs[MSC Classification]{57R18, 57R57, 81T13}

\maketitle

\section{Introduction}
From the late 1980s, gauge theoretic techniques were applied in the area of finite group actions on 4-manifolds. \cite{FURUTA198935} showed that on $S^4$, there is no smooth finite group action with exactly 1 fixed point by arguing that instanton-one invariant connections form a 1-manifold whose boundary can be identified as fixed points of the group action. In \cite{Buchdahl1990}, gauge theoretic techniques were used in studying fixed points of a finite group action on 3-manifolds. \cite{e6fd0a6e-64c3-3748-882e-9b5c2b5bf060} studied pseudofree orbifolds using ASD moduli spaces. A 4-dimensional pseudofree orbifold is a special kind of orbifold which can be expressed as $M^5/S^1$, a quotient of a pseudofree $S^1$-action on a 5-manifold $M^5$. In \cite{A90} Austin studied the orbifold $S^4/\mathbb Z_\alpha$, which is a compactification of $L(\alpha,\beta)\times\mathbb R$ where $L(\alpha,\beta)$ is a Lens space. He also gave a criterion for the existence of instantons on $S^4/\mathbb Z_\alpha$ and calculated the dimension of the instanton moduli space. A more general kind of orbifold, orbifold with isolated fixed points, was discussed in \cite{Furuta1992}, especially when the group-action around each singular point is a cyclic group.

In the study of instanton moduli spaces, gluing theory tells us that given two anti-self-dual connections $A_1, A_2$ on 4-manifolds $X_1, X_2$ respectively, we can glue them together to get a new ASD connection on the space $X_1\# X_2$. It plays an important role in the process of compactifying moduli spaces. This paper follows the idea of gluing theory (cf. Chapter 7 of \cite{DK90}) and discusses the case when there is a finite group $\Gamma$ acting on the 4-manifolds $X_1, X_2$ with $x_1, x_2$ as isolated fixed points, how to glue two ASD $\Gamma$-invariant ASD connections over $X_1, X_2$ together to get an ASD $\Gamma$-invariant connection on $X_1\# X_2$. 

The main differences between the original gluing theory and the $\Gamma$-equivariant case are the following. Firstly, over the fixed points $x_1$ and $x_2$, the $\Gamma$-actions induce two isotropy representations, which are required to be equivalent. Secondly, the gluing parameter depends on the isotropy representations. Finally, we need to deal with the regularity of $A_i$ in the $\Gamma$-invariant spaces.

\section{Set-up}
Suppose $X_1,X_2$ are smooth, oriented, compact, Riemannian 4-manifolds, and $P_1, P_2$ are principal $G$-bundles over $X_1,X_2$ respectively where $G=SU(2)$. Let $\Gamma$ be a finite group acting on $P_i, X_i$ from the left which is smooth and orientation preserving and such that the action on $P_i$ cover the action on $X_i$. 
\[
\begin{tikzcd}
P_i\arrow[loop left,"\Gamma"]\arrow[loop right,"G"]\arrow[d]\\
~~~~~~~~~X_i~\ni~x_i
\end{tikzcd}
\]

\noindent $x_1\in X_1^{\Gamma}$, $x_2\in X_2^{\Gamma}$ are two isolated fixed points with equivalent isotropy representations. i.e., there exists $h\in G$ such that 
\begin{equation}\label{h in the equivalent isotropy reprensentation}
\rho_2(\gamma)=h\rho_1(\gamma)h^{-1}~~~\forall \gamma\in\Gamma
\end{equation}
where $\rho_1,\rho_2$ are isotropy representations of $\Gamma$ at $x_1,x_2$ respectively.

Now we fix two metrics $g_1,g_2$ on $X_1,X_2$ such that the $\Gamma$-action preserves the metrics. This can be achieved by the following lemma.
\begin{lemma}
    For any Riemannian metric $g$ on $X$, 
\[
\tilde g:=\frac{1}{|\Gamma|}\sum_{\gamma\in\Gamma}\gamma^*g
\]
defines a $\Gamma$-invariant metric.
\end{lemma}
The proof is straightforward. We omit it here.

\section{Glue bundles and get an approximate ASD connection $A'$}
The first step is to glue manifolds $X_1$ and $X_2$ by connecting sum. 

Fix a large enough constant $T$ and a small enough constant $\delta$. Let $\lambda>0$ be a constant satisfying $\lambda e^{\delta}\le\frac{1}{2}b$ where $b:=\lambda e^T$. We first glue $X'_1:=X_1\setminus B_{x_1}(\lambda e^{-\delta})$ and $X'_2:=X_2\setminus B_{x_2}(\lambda e^{-\delta})$ together as shown in Figure \ref{figure of gluing X1 and X2 together},
  \begin{figure}[h!]
  \centering 
  \includegraphics[width=10cm, height=8cm]{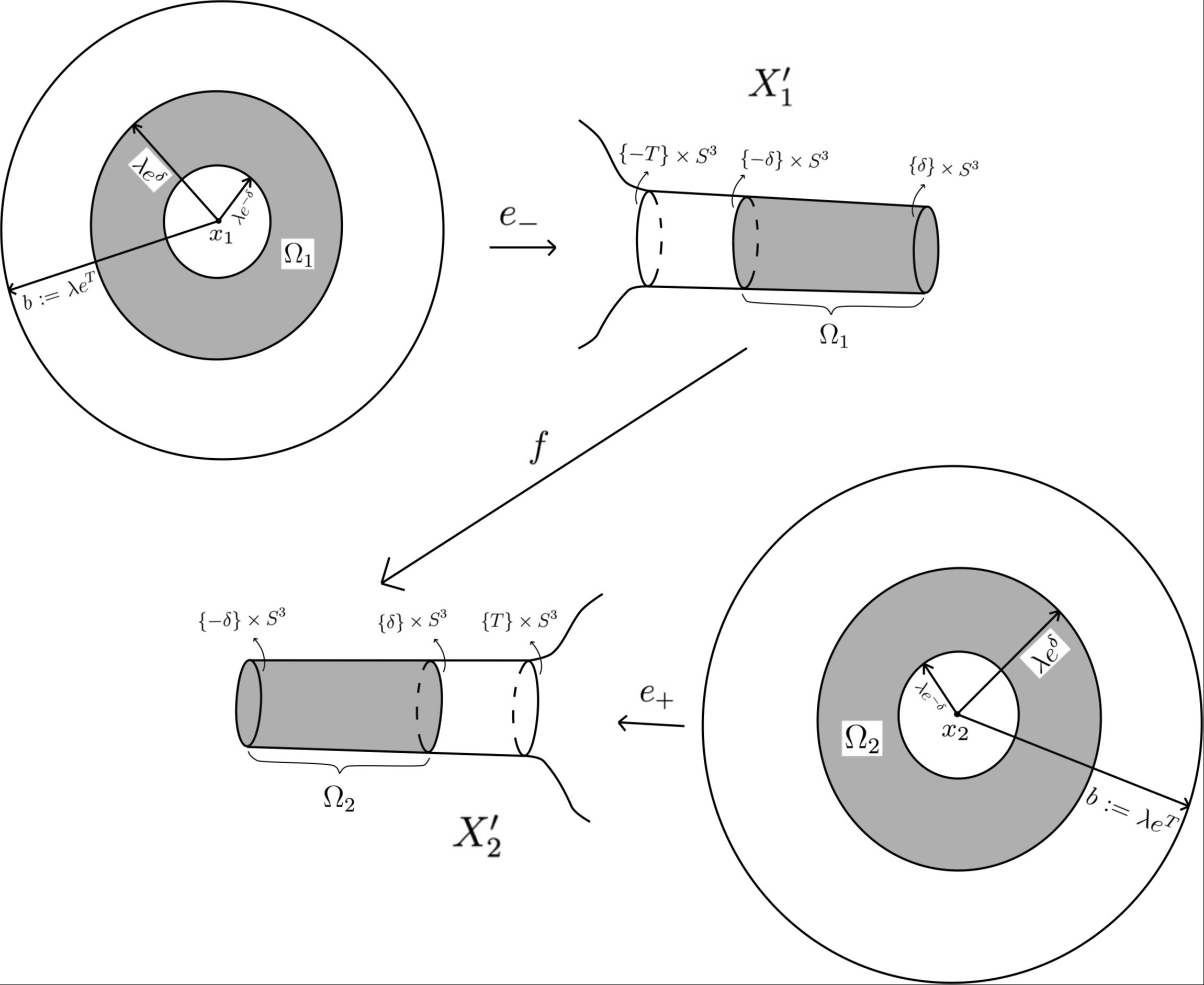}
  \caption{}
  \label{figure of gluing X1 and X2 together}
  \end{figure}
\noindent where $e_\pm$ are defined in polar coordinates by
\begin{eqnarray*}
e_{\pm}:\mathbb R^4\setminus\{0\}&\to& \mathbb R\times S^3 \\
rm&\mapsto&(\pm\log\frac{r}{\lambda},m)
\end{eqnarray*}
and
\begin{equation}\label{identification map in gluing X1X2}
f:\Omega_1=(-\delta,\delta)\times S^3\to\Omega_2=(-\delta,\delta)\times S^3
\end{equation}
is defined to be a $\Gamma$-equivariant conformal map that fixes the first component. Denote the connected sum by $X_1\#_\lambda X_2$ or $X$.

On the new manifold $X$, we define the metric $g_\lambda$ to be a weighted average of $g_1$, $g_2$ on $X_1$ and $X_2$, compared by the diffeomorphism $f$. If $g_\lambda=\sum m_ig_i$ on $X'_i$, we can arrange $1\le m_i\le 2$. This means points are further away from each other on the gluing area.

We now turn to the bundles $P_i$. Suppose $A_i$ are ASD $\Gamma$-invariant connections on $P_i$. We want to glue $P_i|_{X'_i}$ together so that $A_1$ and $A_2$ match on the overlapping part.

The first step is to replace $A_i$ by two $\Gamma$-invariant connections which are flat on the annuli $\Omega_i$. Define the cut-off function $\eta_i$ on $X_i$ such that 
\begin{equation}\label{cut off function eta in gluing1}
\eta_1(x)=\begin{cases}
0~~~x\in [-\delta,+\infty]\times S^3\\
1~~~x\in X_1\setminus B_{x_1}(b)~,
\end{cases}~~
\eta_2(x)=\begin{cases}
0~~~x\in [-\infty,\delta]\times S^3\\
1~~~x\in X_2\setminus B_{x_2}(b)~,
\end{cases}
\end{equation}
and $\eta_i(x)$ depends only on $|x-x_i|$ when $x\in B_{x_i}(b)$. Therefore $\eta_i$ are $\Gamma$-invariant.

Recall from \cite{cmp/1103920742} that in the Euclidean ball $B^4$, an exponential gauge with respect to a connection $A$ is a gauge under which $A(0)=0$ and $A(\partial_r)=0$. Choose an exponential gauge on $B_{x_i}(b)$ and define $A'_i = \eta_i A_i$. Then we have 
\begin{equation*}\label{eq4}
||F^+_{A'_i}||_{L^2} \le \bigg(vol(B_{x_i}(b))\max|F^+_{A'_i}(y)|\bigg)^{\frac{1}{2}}
\le\text{const}\cdot b^2,
\end{equation*}
i.e. $A'_i$ is almost ASD.

The next step is to glue $P_1|_{X'_1}$ and $P_2|_{X'_2}$ together to get a principal $G$-bundle $P$ over $X$ and glue $A'_1$ and $A'_2$ together to get a $\Gamma$-invariant connection $A'$ on $P$.
\begin{lemma}\label{lemma2.2}
There exists a canonical $(\Gamma,G)$-equivariant map $\varphi$:
\begin{eqnarray*}
G\cong P_1|_{x_i}&\xrightarrow{\varphi}&P_2|_{x_2}\cong G\\
g&\mapsto&hg,
\end{eqnarray*}
where $h$ is defined in (\ref{h in the equivalent isotropy reprensentation}) and $(\Gamma,G)$-equivariant means $\varphi$ is $\Gamma$-equivariant and $G$-equivariant.
\end{lemma}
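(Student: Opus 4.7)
The plan is to set up compatible trivializations of the two fibres $P_i|_{x_i}$ and then read both equivariance conditions directly off of the intertwining identity (\ref{h in the equivalent isotropy reprensentation}); the content of the lemma is essentially a translation of that identity into bundle-theoretic language.

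First I would fix basepoints $p_i \in P_i|_{x_i}$. Since each fibre of a principal $G$-bundle is a right $G$-torsor, these choices induce the $G$-equivariant identifications $P_i|_{x_i} \cong G$ implicit in the statement, namely $p_i \cdot g \leftrightarrow g$. Because the left $\Gamma$-action on $P_i$ commutes with the right $G$-action and $x_i$ is $\Gamma$-fixed, for each $\gamma \in \Gamma$ there is a unique $\rho_i(\gamma) \in G$ such that $\gamma \cdot p_i = p_i \cdot \rho_i(\gamma)$, and this assembles into the isotropy representation $\rho_i : \Gamma \to G$. Under the identifications $P_i|_{x_i} \cong G$, the $\Gamma$-action on the fibre becomes left multiplication by $\rho_i(\gamma)$, while the $G$-action is still right multiplication.

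With this setup I would define $\varphi(g) = hg$, i.e.\ $\varphi(p_1 \cdot g) = p_2 \cdot (hg)$. The $G$-equivariance is immediate from associativity: $\varphi(g \cdot g') = h(gg') = (hg)g' = \varphi(g) \cdot g'$. For $\Gamma$-equivariance I would compare
\begin{align*}
\varphi(\gamma \cdot g) &= \varphi(\rho_1(\gamma) g) = h\rho_1(\gamma) g, \\
\gamma \cdot \varphi(g) &= \rho_2(\gamma) (hg) = \rho_2(\gamma) h g,
\end{align*}
and observe that equality of these expressions for all $\gamma, g$ is exactly the relation $h\rho_1(\gamma) = \rho_2(\gamma) h$, which is (\ref{h in the equivalent isotropy reprensentation}).

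Regarding the word \emph{canonical}: a different choice of basepoint $p_i' = p_i \cdot k_i$ replaces the identification $G \cong P_i|_{x_i}$ by a right translation and simultaneously conjugates $\rho_i$ by $k_i$, so the construction depends on $(p_1, p_2)$ only through the element $h$ satisfying (\ref{h in the equivalent isotropy reprensentation}); in particular any ambiguity in $h$ lies in the centraliser of $\rho_1(\Gamma)$ in $G$. I do not anticipate any real obstacle here, as every step is a direct verification once the identification $P_i|_{x_i} \cong G$ and the convention for the isotropy representation are fixed.
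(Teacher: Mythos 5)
Your proof is correct and takes essentially the same route as the paper: define $\varphi(g)=hg$ under the torsor identifications $P_i|_{x_i}\cong G$, note $G$-equivariance is immediate, and verify $\Gamma$-equivariance by computing both $\varphi(\gamma\cdot g)=h\rho_1(\gamma)g$ and $\gamma\cdot\varphi(g)=\rho_2(\gamma)hg$ and invoking $\rho_2(\gamma)h=h\rho_1(\gamma)$, which is exactly (\ref{h in the equivalent isotropy reprensentation}). You spell out the trivialization conventions and add a (correct, though not needed here) remark about basepoint-dependence, but the core diagram chase is identical to the paper's.
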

\begin{proof}The $G$-equivariance is obvious and the $\Gamma$-equivariance follows from:
\[
P_1|_{x_1}\xrightarrow{\varphi}P_2|_{x_2}\xrightarrow{\gamma}P_2|_{x_2}~~,~~~~~~~~~P_1|_{x_1}\xrightarrow{\gamma}P_1|_{x_1}\xrightarrow{\varphi}P_2|_{x_2}
\]
\[
~~~~~g~~~\mapsto ~hg~~\mapsto\rho_2(\gamma)hg~~~~~~~~~~g~\mapsto\rho_1(\gamma)g\mapsto ~h\rho_1(\gamma)g
\]
and $\rho_2(\gamma)hg=h\rho_1(\gamma)h^{-1}hg=h\rho_1(\gamma)g$ for any $\gamma\in\Gamma$.
\end{proof}
Denote the subgroup of $(\Gamma,G)$-equivariant gluing parameters by\label{space of gamma g equivariant gluing parameters}
\begin{equation}\label{definition of space of equivariant gluing parameters}
Gl^\Gamma:=Hom_{(\Gamma,G)}(P_1|_{x_1},P_2|_{x_2}).
\end{equation}

\begin{proposition}\label{invariant gluing parameter}
The subgroup of $(\Gamma,G)$-equivariant gluing parameters $Gl^\Gamma$ takes three forms:
\begin{equation}\label{gamma equiv gluing parameter}
Gl^\Gamma\cong
\begin{cases}
G~~~~~~~\text{if }\rho_1(\Gamma),\rho_2(\Gamma)\subset C(G),\\
U(1)~~~\text{if }\rho_1(\Gamma),\rho_2(\Gamma)\not\subset C(G)\text{ and are contained in some }U(1)\subset G,\\
C(G)~~\text{if }\rho_1(\Gamma),\rho_2(\Gamma)\text{ are not contained in any }U(1)\text{ subgroup in }G,
\end{cases}
\end{equation}
where $C(G)$ is the center of $G$.
\end{proposition}
\begin{proof}
By formula (\ref{h in the equivalent isotropy reprensentation}), $\rho_1(\Gamma)$ and $\rho_2(\Gamma)$ are isomorphic and have isomorphic centralisers. For any element $h'$ in the centraliser of $\rho_1(\Gamma)$, $\varphi':g\mapsto hh'g$ is also a $(\Gamma,G)$-equivariant map between $P_1|_{x_1}$ and $P_2|_{x_2}$ since for all $\gamma\in\Gamma$
\[
hh'\rho_1(\gamma)g=h\rho_1(\gamma)h'g=\rho_2(\gamma)hh'g.
\]
For any element $\varphi'\in Gl^\Gamma$, it can be written as $g\mapsto h'g$ for some $h'\in G$. Then $h^{-1}h'$ is in the centraliser of $\rho_1(\Gamma)$ since for any $\gamma\in\Gamma$, $g\in G$, we have
\begin{align*}
\rho_2(\gamma)h'g=h'\rho_1(\gamma)g~~&\Rightarrow~~h^{-1}\rho_2(\gamma)h'g=h^{-1}h'\rho_1(\gamma)g\\
&\Rightarrow~~\rho_1(\gamma)h^{-1}h'g=h^{-1}h'\rho_1(\gamma)g,
\end{align*}
which implies $h^{-1}h'$ commutes with $\rho_1(\gamma)$.

Therefore $Gl^\Gamma$ is isomorphic to the centraliser of $\rho_1(\Gamma)$ in $G$. The three cases in (\ref{gamma equiv gluing parameter}) are the only three groups that are centraliser of some subgroup in $G$ when $G=SU(2)$.
\end{proof}

Recall that annuli $\Omega_i$ are identified by $f:\Omega_1\to\Omega_2$ defined in (\ref{identification map in gluing X1X2}). 
Take $\varphi\in Gl^\Gamma$, we glue $P_1|_{\Omega_1}$ and $P_2|_{\Omega_2}$ together to get $P:=(P_1|_{X'_1})\#_\varphi(P_2|_{X'_2})$ 
with a $\Gamma$-action;
glue $A'_{1}$ and $A'_{2}$ together to get a $\Gamma$-invariant $A'(\varphi)$.
For different gluing parameter $\varphi_1,\varphi_2$, $A'(\varphi_1)$ and $A'(\varphi_2)$ are gauge equivalent if and only if the parameters $\varphi_1$, $\varphi_2$ are in the same orbit of the action of $\Gamma_{A_1}\times\Gamma_{A_2}$ on $Gl$.
We denote $A'(\varphi)$ by $A'$ when the gluing parameter is contextually clear.

\section{Constructing an ASD connection from $A'$}
The general idea is to find a solution $a\in\Omega^1(X,adP)^{\Gamma}$ so that $A:=A'+a$ is anti-self-dual, i.e.,
\begin{equation}\label{eq1}
F_A^+=F^+_{A'}+d^+_{A'}a+(a\wedge a)^+=0.
\end{equation}
To do so, we wish to find a right inverse $R^{\Gamma}$ of $d^+_{A'}$ and an element $\xi\in\Omega^{2,+}(X,adP)^{\Gamma}$ satisfying
\begin{equation}\label{eq2}
F^+_{A'}+\xi+(R^{\Gamma}\xi\wedge R^{\Gamma}\xi)^+=0.
\end{equation}
Then $a=R^{\Gamma}\xi$ is a solution of equation (\ref{eq1}).

Since $A_i$ are two ASD connections, we have the complex:
\[
0\to \Omega^0(X_i,adP_i)\xrightarrow{d_{A_i}} \Omega^1(X_i,adP_i)\xrightarrow{d_{A_i}^+}\Omega^{2,+}(X_i,adP_i)\to 0.
\]
We assume that the second cohomology classes $H^2_{A_1}$, $H^2_{A_2}$ are both zero. The $\Gamma$-action can be induced on this chain complex naturally. It is worth mentioning that the $\Gamma$-action preserves the metric, so the space $\Omega^{2,+}(X_i,adP_i)$ is $\Gamma$-invariant. Define the following two averaging maps:
\begin{eqnarray*}
ave:\Omega^1(X_i,adP_i)&\to&\Omega^1(X_i,adP_i)^{\Gamma}\\
a&\mapsto&\frac{1}{|\Gamma|}\sum_{\gamma\in\Gamma}\gamma^*a\\
ave:\Omega^{2,+}(X_i,adP_i)&\to&\Omega^{2,+}(X_i,adP_i)^{\Gamma}\\
\xi&\mapsto&\frac{1}{|\Gamma|}\sum_{\gamma\in\Gamma}\gamma^*\xi.
\end{eqnarray*}
Note that these maps are surjective since any $\Gamma$-invariant element is mapped to itself.

\begin{proposition}\label{1}The following diagram

\begin{tikzcd}
0\arrow{r}&\Omega^0(X_i,adP_i)\arrow{r}{d_{A_i}}&\Omega^1(X_i,adP_i)\arrow{r}{d_{A_i}^+}\arrow[two heads]{d}{ave}&\Omega^{2,+}(X_i,adP_i)\arrow{r}\arrow[two heads]{d}{ave}&0\\
0\arrow{r}&\Omega^0(X_i,adP_i)^{\Gamma}\arrow{r}{d_{A_i}}&\Omega^1(X_i,adP_i)^{\Gamma}\arrow{r}{d_{A_i}^+}&\Omega^{2,+}(X_i,adP_i)^{\Gamma}\arrow{r}&0
\end{tikzcd}\\
commutes.
\end{proposition}
\begin{proof}
It suffices to show that $d_{A_i}:\Omega^1(X_i,adP_i)\to\Omega^2(X_i,adP_i)$ and $\gamma$ commute for any $\gamma\in\Gamma$. For any $\eta\in\Omega^1(X_i,adP_i)$, we treat $\eta$ as a Lie algebra valued 1-form on $P_i$, then
\begin{eqnarray*}
(d+A_i)(\gamma^*\eta) = \gamma^*d\eta+[A_i,\gamma^*\eta] = \gamma^*d\eta+[\gamma^*A_i,\gamma^*\eta] =  \gamma^*\big((d+A_i)\eta\big).
\end{eqnarray*}
\end{proof}
By Proposition \ref{1},
\[
\text{Im}(d^+_{A_i}\circ ave)=\text{Im}(ave\circ d^+_{A_i})=\Omega^{2,+}(X_i,adP_i)^{\Gamma}.
\]
Therefore, $(H^2_{A_i})^{\Gamma}=0$ and there exists right inverses
\[
R_i^{\Gamma}:\Omega^{2,+}(X_i,adP_i)^{\Gamma}\to\Omega^1(X_i,adP_i)^{\Gamma}
\]
to $d_{A_i}^+$. 

\begin{proposition}\label{3}
$R_i^{\Gamma}$ are bounded operators from $\Omega^{2,+}_{L^2}(X_i,adP_i)^{\Gamma}$ to $\Omega^1_{L^2_1}(X_i,adP_i)^{\Gamma}$.
\end{proposition}
The proof of Proposition \ref{3} follows from Proposition 2.13 of Chapter III of \cite{LawsonMichelsohn+1990} and the fact that the $X_i$ are compact. 

By the Sobolev embedding theorem, we have
\[
||R_i^{\Gamma}\xi||_{L^4}\le\text{const.}||R_i^{\Gamma}\xi||_{L_1^2},
\]
and combined with Proposition \ref{3}, we have
\begin{equation}\label{eq5}
||R_i^{\Gamma}\xi||_{L^4}\le\text{const.}||\xi||_{L^2}.
\end{equation}
Define two operators $Q_i^{\Gamma}:\Omega^{2,+}(X_i,adP_i)^{\Gamma}\to\Omega^1(X_i,adP_i)^{\Gamma}$ by
\[
Q_i^{\Gamma}(\xi):=\beta_iR_i^{\Gamma}\gamma_i(\xi),
\]
where $\beta_i,\gamma_i$ are cut-off functions defined in the Figure \ref{cut off functions beta and gamma} where $\beta_1$ varies on $(1,\delta)\times S^3$, $\beta_2$ varies on $(-\delta,-1)\times S^3$ and $\gamma_i$ varies on $(-1,1)\times S^3$. We can choose $\beta_i$ such that $\left|\displaystyle\frac{\partial\beta_i}{\partial t}\right|<\displaystyle\frac{2}{\delta}$ pointwise, then
\begin{equation}\label{eq6}
||\nabla\beta_i||_{L^4}\le4\pi\left(\int_1^{\delta}\frac{2^4}{\delta^4}dt\right)^{1/4}<64\pi\delta^{-3/4}.
\end{equation}
  \begin{figure}[h!]
  \centering 
  \includegraphics[width=15cm, height=6cm]{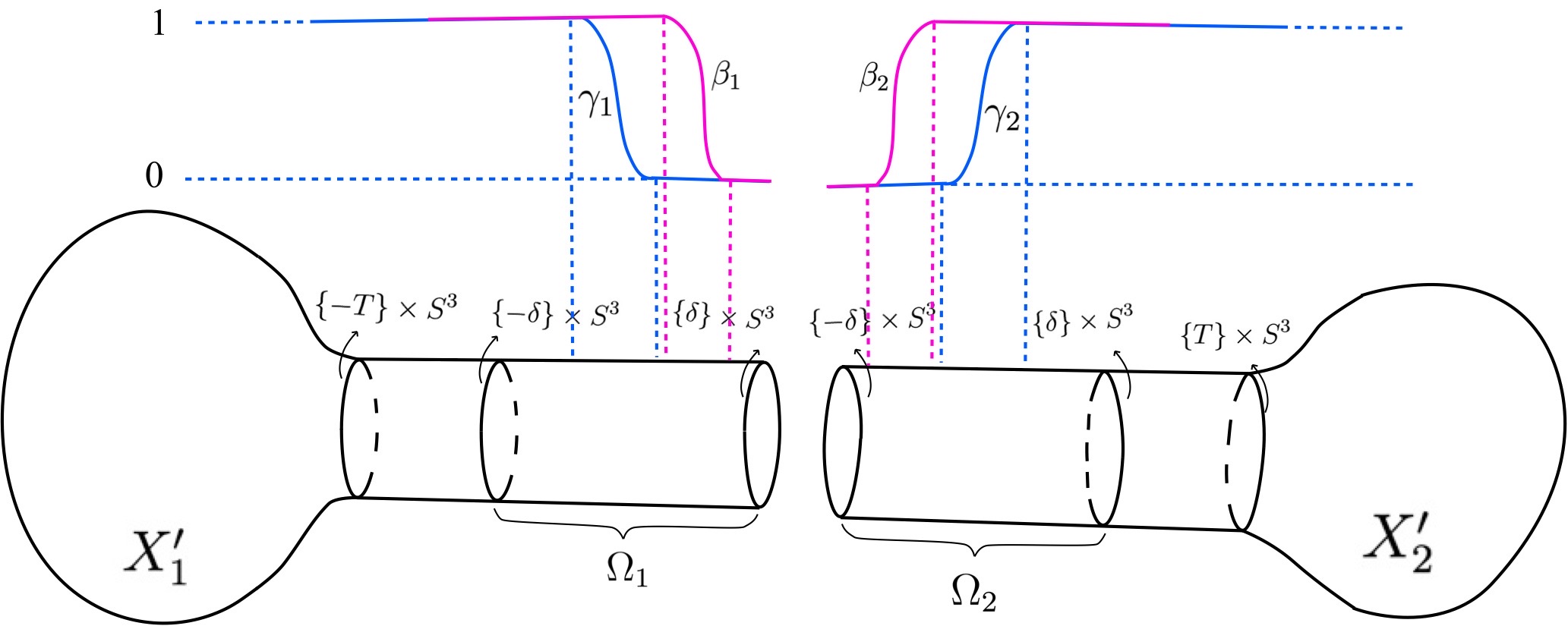}
  \caption{}
  \label{cut off functions beta and gamma}
  \end{figure}
We can choose $\gamma_i$ such that $\gamma_1+\gamma_2=1$ on $\Omega_1\#_f\Omega_2$ where $f$ is defined in (\ref{identification map in gluing X1X2}).

Now we want to extend the operators $Q_i^{\Gamma}$ to $X=X_1\#_\lambda X_2$. Firstly, extend $\beta_i,\gamma_i$ to $X$ in the obvious way. It is worth mentioning that after the extension $\gamma_1+\gamma_2=1$ on $X$. Secondly, for any $\xi\in\Omega^{2,+}(X,adP)^{\Gamma}$, $\gamma_i\xi$ is supported on $X'_i$, thus $R_i^{\Gamma}\gamma_i\xi$ makes sense. Finally, extend $\beta_iR_i^{\Gamma}\gamma_i(\xi)$ to the whole $X$. Therefore $Q_i^{\Gamma}$ can be treated as an operator:
\[
Q_i^{\Gamma}:\Omega^{2,+}(X,adP)^{\Gamma}\to\Omega^1(X,adP)^{\Gamma}.
\]
Define
\[
Q^{\Gamma}:=Q_1^{\Gamma}+Q_2^{\Gamma}:\Omega^{2,+}(X,adP)^{\Gamma}\to\Omega^1(X,adP)^{\Gamma}.
\]
\begin{lemma}\label{lemma3.4}
With definitions above, we have $\forall\xi\in\Omega^{2,+}(X,adP)^{\Gamma}$,
\[
||d_{A'}^+Q^{\Gamma}(\xi)-\xi||_{L^2}\le \text{const.}(b^2+\delta^{-3/4})||\xi||_{L^2}.
\]
\end{lemma}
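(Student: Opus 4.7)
\textbf{Proof plan for Lemma \ref{lemma3.4}.} The plan is to expand $d_{A'}^+ Q^\Gamma(\xi)$ using the Leibniz rule, apply the right-inverse property of each $R_i^\Gamma$ on the bulk of $X$, and then estimate the two remainder terms by H\"older's inequality using the already-established bounds (\ref{eq3}), (\ref{eq5}), (\ref{eq6}). First I would write
\[
d_{A'}^+ Q^\Gamma(\xi) \;=\; \sum_{i=1,2} d_{A'}^+\bigl(\beta_i R_i^\Gamma \gamma_i \xi\bigr) \;=\; \sum_{i=1,2}\Bigl[(d\beta_i\wedge R_i^\Gamma\gamma_i\xi)^+ \;+\; \beta_i\, d_{A'}^+ R_i^\Gamma\gamma_i\xi\Bigr].
\]
The key structural observation is that on the support of $\beta_i$ the glued connection $A'$ agrees, in the trivializations used for the gluing, with $A'_i = \eta_i A_i$, so I would rewrite the second piece as
\[
\beta_i\,d_{A'}^+ R_i^\Gamma\gamma_i\xi \;=\; \beta_i\,d_{A_i}^+ R_i^\Gamma\gamma_i\xi \;+\; \beta_i\bigl[(A'_i-A_i)\wedge R_i^\Gamma\gamma_i\xi\bigr]^+.
\]
Since $\gamma_i\xi$ is a $\Gamma$-invariant form supported on $X'_i\subset X_i$, the right-inverse property $d_{A_i}^+ R_i^\Gamma = \mathrm{id}$ on $\Omega^{2,+}(X_i,adP_i)^\Gamma$ collapses the first term to $\beta_i\gamma_i\xi$. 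Because $\beta_i\equiv 1$ on the support of $\gamma_i$ and $\gamma_1+\gamma_2\equiv 1$ on $X$, summing gives $\sum_i\beta_i\gamma_i\xi = \xi$, so
\[
d_{A'}^+ Q^\Gamma(\xi)-\xi \;=\; \sum_{i=1,2}(d\beta_i\wedge R_i^\Gamma\gamma_i\xi)^+ \;+\; \sum_{i=1,2}\beta_i\bigl[(A'_i-A_i)\wedge R_i^\Gamma\gamma_i\xi\bigr]^+.
\]

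Next I would estimate the two families of remainders with H\"older's inequality. For the $d\beta_i$ terms,
\[
\|(d\beta_i\wedge R_i^\Gamma\gamma_i\xi)^+\|_{L^2} \;\le\; \|d\beta_i\|_{L^4}\,\|R_i^\Gamma\gamma_i\xi\|_{L^4} \;\le\; \text{const}\cdot\delta^{-3/4}\,\|\xi\|_{L^2},
\]
where the bound on $\|d\beta_i\|_{L^4}$ is (\ref{eq6}) and the bound on $\|R_i^\Gamma\gamma_i\xi\|_{L^4}$ follows from (\ref{eq5}) (multiplication by $\gamma_i$ is bounded on $L^2$). For the gauge-defect terms,
\[
\|\beta_i[(A'_i-A_i)\wedge R_i^\Gamma\gamma_i\xi]^+\|_{L^2} \;\le\; \|A'_i-A_i\|_{L^4}\,\|R_i^\Gamma\gamma_i\xi\|_{L^4} \;\le\; \text{const}\cdot b^2\,\|\xi\|_{L^2},
\]
by (\ref{eq3}) and (\ref{eq5}). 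Combining the four contributions yields the stated inequality.

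The step I expect to cost the most care is the identification $A' = A'_i$ on $\mathrm{supp}\,\beta_i$: one has to check that the bundle-gluing via the $(\Gamma,G)$-equivariant parameter $\varphi$, together with the exponential gauges chosen on $B_{x_i}(b)$ to define the cutoff $\eta_i$, are compatible enough that $A'$ is literally $\eta_i A_i$ on the cylindrical region where $\beta_i$ lives, with no cross-term from the other factor. Once this is in place, the remaining estimates are a direct application of H\"older and the bounds already proved in (\ref{eq3}), (\ref{eq5}), (\ref{eq6}); the $\Gamma$-equivariance does not enter the estimates themselves since $R_i^\Gamma$, the cutoffs, and all forms involved have already been arranged $\Gamma$-invariantly.
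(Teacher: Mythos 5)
Your proposal is correct and is essentially the same argument as the paper's proof: you expand $d_{A'}^+(\beta_i R_i^\Gamma\gamma_i\xi)$ by the Leibniz rule into the three pieces $\beta_i d_{A_i}^+ R_i^\Gamma\gamma_i\xi$, $\nabla\beta_i\cdot R_i^\Gamma\gamma_i\xi$, and the bracket with $a_i = A'_i - A_i$, use the right-inverse property and $\beta_i\gamma_i=\gamma_i$, $\gamma_1+\gamma_2=1$ to produce $\xi$, and bound the remainders by H\"older with (\ref{eq3}), (\ref{eq5}), (\ref{eq6}). Your caution about verifying $A'=A'_i$ on $\mathrm{supp}\,\beta_i$ is exactly the point the paper uses implicitly when it replaces $d_{A'}^+Q_i^\Gamma$ with $d_{A'_i}^+Q_i^\Gamma$, so the proposal matches the intended proof in substance and detail.
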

\begin{proof}
\begin{eqnarray*}
&&||d_{A'}^+Q^{\Gamma}(\xi)-\xi||_{L^2}\\
&=&||d_{A'}^+(Q_1^{\Gamma}(\xi)+Q_2^{\Gamma}(\xi))-\gamma_1\xi-\gamma_2\xi||_{L^2}\\
&=&||d_{A'_1}^+Q_1^{\Gamma}(\xi)+d_{A'_2}^+Q_2^{\Gamma}(\xi)-\gamma_1\xi-\gamma_2\xi||_{L^2}\\
&\le&||d_{A'_1}^+Q_1^{\Gamma}(\xi)-\gamma_1\xi||_{L^2}+||d_{A'_2}^+Q_2^{\Gamma}(\xi)-\gamma_2\xi||_{L^2}.
\end{eqnarray*}
Suppose $A'_i=A_i+a_i$, then
\begin{eqnarray*}
d_{A'_i}^+Q_i^{\Gamma}\xi&=&d_{A_i}^+\beta_iR_i^{\Gamma}\gamma_i\xi+[a_i,\beta_iR_i^{\Gamma}\gamma_i\xi]^+\\
&=&\beta_id_{A_i}^+R_i^{\Gamma}\gamma_i\xi+\nabla\beta_iR_i^{\Gamma}\gamma_i\xi+[\beta_ia_i,R_i^{\Gamma}\gamma_i\xi]^+.
\end{eqnarray*}
The three terms on the right hand side have the following estimates.
\begin{enumerate}
\item[(i).]$\beta_id_{A_i}^+R_i^{\Gamma}\gamma_i\xi=\beta_i\gamma_i\xi=\gamma_i\xi$.
\item[(ii).]$||\nabla\beta_iR_i^{\Gamma}\gamma_i\xi||_{L^2}\le||\nabla\beta_i||_{L^4}||R_i^{\Gamma}\gamma_i\xi||_{L^4}\le \text{const.}\delta^{-3/4}||\xi||_{L^2}$ by the Sobolev multiplication theorem and (\ref{eq5}) and (\ref{eq6}).
\item[(iii).]$||[\beta_ia_i,R_i^{\Gamma}\gamma_i\xi]^+||_{L^2}\le\text{const.}||a_i||_{L^4}||R_i^{\Gamma}\gamma_i\xi||_{L^4}\le\text{const.}b^2||\xi||_{L^2}$ by the Sobolev multiplication theorem and (\ref{eq3}) and (\ref{eq5}).
\end{enumerate}
Therefore $||d_{A'_i}^+Q_i^{\Gamma}(\xi)-\gamma_i\xi||_{L^2}\le\text{const.}(b^2+\delta^{-3/4})||\xi||_{L^2}$ and the result follows.
\end{proof}

The result of Lemma \ref{lemma3.4} means that $Q^{\Gamma}$ is almost a right inverse of $d_{A'}^+$. Next we show there is a right inverse $R^{\Gamma}$ of $d_{A'}^+$.

By Lemma \ref{lemma3.4}, we can choose $\delta$ large enough and $b$ small enough so that $||d_{A'}^+Q^{\Gamma}(\xi)-\xi||_{L^2}\le 2/3||\xi||_{L^2}$, which implies
\[
1/3||\xi||_{L^2}\le||d_{A'}^+Q^{\Gamma}(\xi)||_{L^2}\le 5/3||\xi||_{L^2}.
\]
Then $d_{A'}^+Q^{\Gamma}$ is invertible and
\begin{equation}\label{eq7}
1/3||(d_{A'}^+Q^{\Gamma})^{-1}(\xi)||_{L^2}\le||\xi||_{L^2}.
\end{equation}
Define $R^{\Gamma}:=Q^{\Gamma}(d_{A'}^+Q^{\Gamma})^{-1}$, then it is easy to see that $R^{\Gamma}$ is the right inverse of $d_{A'}^+$. Note that $R^{\Gamma}$ depends on the gluing parameter $\varphi$, so we denote the operator by $R^{\Gamma}_{\varphi}$ when the gluing parameter is not contextually clear. 

$R^{\Gamma}$ has the following good estimate:
\begin{eqnarray}\label{eq8}
||R^{\Gamma}\xi||_{L^4}&=&||(Q_1^{\Gamma}+Q_2^{\Gamma})(d_{A'}^+Q^{\Gamma})^{-1}(\xi)||_{L^4}\nonumber\\
&\le&||Q_1^{\Gamma}(d_{A'}^+Q^{\Gamma})^{-1}(\xi)||_{L^4}+||Q_2^{\Gamma}(d_{A'}^+Q^{\Gamma})^{-1}(\xi)||_{L^4}\nonumber\\
&\le&||R_1^{\Gamma}\gamma_1(d_{A'}^+Q^{\Gamma})^{-1}(\xi)||_{L^4}+||R_2^{\Gamma}\gamma_2(d_{A'}^+Q^{\Gamma})^{-1}(\xi)||_{L^4}\nonumber\\
\text{(by (\ref{eq5}))}&\le&\text{const.}||\gamma_1(d_{A'}^+Q^{\Gamma})^{-1}(\xi)||_{L^2}+\text{const.}||\gamma_2(d_{A'}^+Q^{\Gamma})^{-1}(\xi)||_{L^2}\nonumber\\
&\le&\text{const.}||(d_{A'}^+Q^{\Gamma})^{-1}(\xi)||_{L^2}\nonumber\\
\text{(by (\ref{eq7}))}&\le&\text{const.}||\xi||_{L^2}.
\end{eqnarray}
Then we have
\begin{eqnarray}\label{eq9}
&&||(R^{\Gamma}\xi_1\wedge R^{\Gamma}\xi_1)^+-(R^{\Gamma}\xi_2\wedge R^{\Gamma}\xi_2)^+||_{L^2}\nonumber\\
&\le&||R^{\Gamma}\xi_1\wedge R^{\Gamma}\xi_1-R^{\Gamma}\xi_2\wedge R^{\Gamma}\xi_2||_{L^2}\nonumber\\
&=&\frac{1}{2}||(R^{\Gamma}\xi_1+R^{\Gamma}\xi_2)\wedge(R^{\Gamma}\xi_1-R^{\Gamma}\xi_2)+(R^{\Gamma}\xi_1-R^{\Gamma}\xi_2)\wedge(R^{\Gamma}\xi_1+R^{\Gamma}\xi_2)||_{L^2}\nonumber\\
&\le&\text{const.} ||R^{\Gamma}\xi_1-R^{\Gamma}\xi_2||_{L^4}||R^{\Gamma}\xi_1+R^{\Gamma}\xi_2||_{L^4}\nonumber\\
\text{(by (\ref{eq8}))}&\le&\text{const.}||\xi_1-\xi_2||_{L^2}(||\xi_1||_{L^2}+||\xi_2||_{L^2}).
\end{eqnarray}

Define an operator $T:\xi\mapsto -F^+(A')-(R^{\Gamma}\xi\wedge R^{\Gamma}\xi)^+$, then solving equation (\ref{eq2}) means finding a fixed point of the operator $T$. Here we apply the contraction mapping theorem to $T$ to show there exists a unique fixed point of $T$. There are two things to check:
\begin{enumerate}
\item[1.] There is an $r>0$ such that for small enough $b$, $T$ is a map from the ball $B(r)\subset\Omega^{2,+}_{L^2}(X,adP)$ to itself. This follows from
\begin{eqnarray*}
||\xi||_{L^2}<r~~\Rightarrow~~||T\xi||_{L^2}&\le&||F^+(A')||_{L^2}+||R^{\Gamma}\xi\wedge R^{\Gamma}\xi||_{L^2}\\
&\le&const. b^2+||R^\Gamma\xi||^2_{L^4}\\
&\le&const. b^2+const. ||\xi||^2_{L^2}\\
&\le&const. (b^2+r^2)\\
&<&r~~(for~small~b,r~with~b<<r).
\end{eqnarray*}
\item[2.] $T$ is a contraction for sufficiently small $r$, i.e., there exists $\lambda<1$ such that
\[
||T\xi_1-T\xi_2||\le\lambda||\xi_1-\xi_2||~~\forall\xi_1,\xi_2.
\]
This follows from (\ref{eq9}).
\end{enumerate}

Now we have proved that there exists a unique solution to equation (\ref{eq2}).
\begin{theorem}
Suppose $A_1$, $A_2$ are $\Gamma$-invariant ASD connections on $X_1$, $X_2$ respectively with $H^2_{A_i}=0$. Let $\lambda,T,\delta$ be positive real numbers such that $b:=\lambda e^T>2\lambda e^{\delta}$. Then we can make $\delta$ large enough and $b$ small enough so that for any $(\Gamma,G)$-equivariant gluing parameter $\varphi\in Hom_{(G,\Gamma)}(P_1|_{x_1},P_2|_{x_2})$, there exists $a_{\varphi}\in\Omega^1(X,adP)^{\Gamma}$ with $||a_{\varphi}||_{L^4}\le\text{const.}b^2$ such that $A'(\varphi)+a_{\varphi}$ is a $\Gamma$-invariant ASD connection on $X$. Moreover, if $\varphi_1,\varphi_2$ are in the same orbit of $\Gamma_{A_1}\times \Gamma_{A_2}$ on $Gl$, then $A'(\varphi_1)+a_{\varphi_1}, A'(\varphi_2)+a_{\varphi_2}$ are gauge equivalent.
\end{theorem}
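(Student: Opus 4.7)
The theorem is essentially a packaging of the preceding analysis, so my plan is to string together the ingredients already developed rather than introduce new machinery. First, I would apply the contraction mapping theorem to the operator $T:\xi\mapsto -F^+(A')-(R^\Gamma\xi\wedge R^\Gamma\xi)^+$, exactly as set up on the $\Gamma$-invariant Hilbert space $\Omega^{2,+}_{L^2}(X,adP)^\Gamma$. The two hypotheses (self-map on a ball of radius $r$ and contraction) were just verified using estimates (\ref{eq4}) and (\ref{eq9}), provided $\delta$ is large and $b$ small; the same calculation shows the unique fixed point $\xi$ satisfies $\|\xi\|_{L^2}\le\mathrm{const}\cdot b^2$. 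Setting $a_\varphi:=R^\Gamma_\varphi\xi$, the defining identity $d^+_{A'}R^\Gamma=\mathrm{id}$ together with $F^+_{A'}+\xi+(R^\Gamma\xi\wedge R^\Gamma\xi)^+=0$ immediately gives $F^+_{A'+a_\varphi}=0$, so $A'(\varphi)+a_\varphi$ is ASD.

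Next I would verify the two qualitative assertions. The $\Gamma$-invariance of $a_\varphi$ is automatic: by construction $R^\Gamma$ factors through the averaging maps of Proposition \ref{1}, hence takes $\Gamma$-invariant input to $\Gamma$-invariant output, and $\xi$ itself is $\Gamma$-invariant by the contraction argument inside the $\Gamma$-invariant subspace. The $L^4$-bound follows at once from (\ref{eq8}) applied to $\xi$:
\[
\|a_\varphi\|_{L^4}=\|R^\Gamma\xi\|_{L^4}\le\mathrm{const}\cdot\|\xi\|_{L^2}\le\mathrm{const}\cdot b^2.
\]

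For the gauge-equivalence statement, suppose $\varphi_2=\tilde\sigma_2^{-1}\circ\varphi_1\circ\tilde\sigma_1$ with $\tilde\sigma_i\in\Gamma_{A_i}$. By Proposition \ref{page 286 and prop 7.2.9 in [DK]} there is a bundle isomorphism $\Psi:(P_1|_{X'_1})\#_{\varphi_1}(P_2|_{X'_2})\to(P_1|_{X'_1})\#_{\varphi_2}(P_2|_{X'_2})$, built from $\tilde\sigma_1,\tilde\sigma_2$ and the interpolating $\alpha_i$ as in that proof, satisfying $\Psi^*A'(\varphi_2)=A'(\varphi_1)$. Pulling back the ASD equation through $\Psi$, the form $\Psi^*a_{\varphi_2}$ solves the same equation (\ref{eq1}) associated to $A'(\varphi_1)$, lies in the ball of radius $r$ in $\Omega^1(X,adP)^\Gamma$, and is $\Gamma$-invariant because $\Psi$ intertwines the two $\Gamma$-actions on the glued bundles. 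Uniqueness of the contraction fixed point then forces $\Psi^*a_{\varphi_2}=a_{\varphi_1}$, so $\Psi$ realises the desired gauge equivalence $A'(\varphi_1)+a_{\varphi_1}\cong A'(\varphi_2)+a_{\varphi_2}$.

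The main delicate point I expect is the last paragraph: verifying that $\Psi^*a_{\varphi_2}$ genuinely lands inside the ball on which $T$ was shown to be a contraction, so that the uniqueness clause of the contraction mapping theorem can be invoked. This is not difficult but requires checking that the $L^2$-norms are preserved by $\Psi$ (true because $\Psi$ is an isometric bundle isomorphism away from the small interpolation region where $\beta_i$ is non-constant, and on that region the distortion is controlled by the same constants that entered the estimates of Lemma \ref{lemma3.4}). Once that is in hand, the whole theorem reduces to assembling the estimates already proved.
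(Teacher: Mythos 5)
Your first two paragraphs (existence of the fixed point $\xi$, the bound $\|\xi\|_{L^2}\le\mathrm{const}\cdot b^2$, hence $\|a_\varphi\|_{L^4}\le\mathrm{const}\cdot b^2$ via (\ref{eq8}), $\Gamma$-invariance, and the ASD property of $A'(\varphi)+a_\varphi$) match what the paper does; indeed the paper's proof begins ``we only need to prove the last statement'' and treats exactly that part as already established. The gap is in your final step. The contraction mapping theorem was applied to the operator $T$ acting on a ball in $\Omega^{2,+}_{L^2}(X,adP)^{\Gamma}$, so its uniqueness clause concerns the self-dual $2$-form $\xi$ solving (\ref{eq2}), not $1$-forms $a$ solving (\ref{eq1}). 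Small solutions $a$ of (\ref{eq1}) are very far from unique: composing $A'+a$ with any gauge transformation close to the identity, or moving in the gluing-parameter/moduli directions, produces other small solutions. So from ``$\Psi^*a_{\varphi_2}$ solves (\ref{eq1}) for $A'(\varphi_1)$, is small and $\Gamma$-invariant'' you cannot conclude $\Psi^*a_{\varphi_2}=a_{\varphi_1}$; and your closing worry about whether $\Psi^*a_{\varphi_2}$ ``lands inside the ball on which $T$ is a contraction'' is a symptom of the same confusion, since that ball lives in $\Omega^{2,+}$, not in $\Omega^1$.

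To close the argument you must transfer the uniqueness at the level of $\xi$, and for that the essential ingredient — which your proposal never establishes — is that pullback by the gauge transformation $\sigma$ identifying $A'(\varphi_1)$ with $A'(\varphi_2)$ commutes with the right inverse, i.e. $\sigma^*R^{\Gamma}_{\varphi_1}=R^{\Gamma}_{\varphi_2}\sigma^*$. This is what the paper does: since $\sigma^*$ commutes with $d^+_{A'}$ (and with the cut-off construction entering $Q^{\Gamma}$) and $R^{\Gamma}=Q^{\Gamma}(d^+_{A'}Q^{\Gamma})^{-1}$, applying $\sigma^*$ to the equation (\ref{eq2}) satisfied by $\xi(\varphi_1)$ shows that $\sigma^*\xi(\varphi_1)$ solves the equation (\ref{eq2}) associated to $\varphi_2$; the uniqueness of the contraction fixed point then gives $\sigma^*\xi(\varphi_1)=\xi(\varphi_2)$, and hence $\sigma^*a_{\varphi_1}=\sigma^*R^{\Gamma}_{\varphi_1}\xi(\varphi_1)=R^{\Gamma}_{\varphi_2}\xi(\varphi_2)=a_{\varphi_2}$, so $\sigma^*(A'(\varphi_1)+a_{\varphi_1})=A'(\varphi_2)+a_{\varphi_2}$. (Incidentally, the norm-preservation issue you flag as the delicate point is not one: $\sigma$ covers the identity of $X$ and acts fibrewise through the adjoint representation, so it is an exact isometry on $L^2$ and $L^4$; the real delicacy is the commutation with $R^{\Gamma}$ just described.)
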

\begin{proof}
We only need to prove the last statement.

If $\varphi_1,\varphi_2$ are in the same orbit of $\Gamma_{A_1}\times \Gamma_{A_2}$ on $Gl$, then  $A'(\varphi_1), A'(\varphi_2)$ are gauge equivalent. For some gauge transformation $\sigma$ we have $\sigma^*A'(\varphi_1)=A'(\varphi_2)$. Applying $\sigma^*$ on both sides of the following formula
\[
F^+_{A'(\varphi_1)}+\xi(\varphi_1)+(R^{\Gamma}_{\varphi_1}\xi(\varphi_1)\wedge R^{\Gamma}_{\varphi_1}\xi(\varphi_1))^+=0
\]
gives
\begin{equation}\label{sigma pull back the formula for phi1}
\sigma^*F^+_{A'(\varphi_1)}+\sigma^*\xi(\varphi_1)+\sigma^*(R^{\Gamma}_{\varphi_1}\xi(\varphi_1)\wedge R^{\Gamma}_{\varphi_1}\xi(\varphi_1))^+=0.
\end{equation}
Since $\sigma^*$ and $d_{A'}^+$ commute and $R^{\Gamma}=Q^{\Gamma}(d_{A'}^+Q^{\Gamma})^{-1}$, then $\sigma^*$ and $R^{\Gamma}$ commute. Then (\ref{sigma pull back the formula for phi1}) becomes
\[
F^+_{A'(\varphi_1)}+\sigma^*\xi(\varphi_1)+(R^{\Gamma}_{\varphi_2}\sigma^*\xi(\varphi_1)\wedge R^{\Gamma}_{\varphi_2}\sigma^*\xi(\varphi_1))^+=0
\]
This means $\sigma^*\xi(\varphi_1)$, $\xi(\varphi_2)$ are solutions to $F^+_{A'(\varphi_2)}+\xi+(R^{\Gamma}_{\varphi_2}\xi\wedge  R^{\Gamma}_{\varphi_2}\xi)^+=0$, which implies $\sigma^*\xi(\varphi_1)$=$\xi(\varphi_2)$. The following deduction completes the proof.
\[
\sigma^*\xi(\varphi_1)=\xi(\varphi_2)~~\Rightarrow~~\sigma^*R^{\Gamma}_{\varphi_1}\xi(\varphi_1)=R^{\Gamma}_{\varphi_2}\sigma^*\xi(\varphi_1)=R^{\Gamma}_{\varphi_2}\xi(\varphi_2)
\]
\[
\Rightarrow~~\sigma^*a_{\varphi_1}=a_{\varphi_2}~~\Rightarrow~~\sigma^*(A'(\varphi_1)+a_{\varphi_1})=A'(\varphi_2)+a_{\varphi_2}.
\]
\end{proof}



\section*{Declarations}

The author declares that there is no conflict of interest.

\bibliography{sn-article}

\end{document}